\documentclass[a4paper,10pt]{article}

\usepackage{color}
\usepackage[english]{babel}
\usepackage{amsmath,amsfonts,amssymb,amsthm}
\usepackage{url}
%\urlstyle{same}
%\usepackage[notref,notcite]{showkeys}
\usepackage{graphicx}

\newtheorem{theorem}{Theorem}[section]

\newtheorem{lemma}{Lemma}[section]
\newtheorem{proposition}{Proposition}[section]
\newtheorem{corollary}{Corollary}[section]

\theoremstyle{definition}
\newtheorem{remark}{Remark}[section]

\numberwithin{equation}{section}

\newcommand\blfootnote[1]{\begingroup\renewcommand\thefootnote{}\footnote{#1}\addtocounter{footnote}{-1}\endgroup}

\begin{document}

\title{
{\bf\Large Monotone wave fronts for $(p, q)$-Laplacian driven reaction-diffusion equations }}

\author{
\vspace{1mm}
\\
\vspace{1mm}\\
{\bf\large Maurizio Garrione, \, Marta Strani}
\vspace{1mm}\\
{\it\small Dipartimento di Matematica e Applicazioni, Universit\`a di Milano-Bicocca}\\
{\it\small via Cozzi 55}, {\it\small 20125 Milano, Italy}\\
{\it\small e-mail addresses: maurizio.garrione@unimib.it, \, marta.strani@unimib.it}\vspace{1mm}\\}

\date{}

\maketitle

\vspace{-2mm}

\begin{abstract}
We study the existence of monotone heteroclinic traveling waves for the $1$-dimensional reaction-diffusion equation 
$$
u_t = (\vert u_x \vert^{p-2} u_x + \vert u_x \vert^{q-2} u_x)_x + f(u),
$$
where the non-homogeneous operator appearing on the right-hand side is known as $(p, q)$-Laplacian. Here we assume that $2 \leq q < p$ and $f$ is a nonlinearity of Fisher type, namely it is always positive out of its zeros. We give an estimate of the critical speed and we comment on the roles of $p$ and $q$ in the dynamics, providing some numerical simulations.
\end{abstract}

\blfootnote{\textit{AMS Subject Classification: 35K55, 35K57, 35J92, 34C37}.}
\blfootnote{\textit{Keywords: reaction-diffusion equations, $(p, q)$-Laplacian, admissible speeds.} }

\section{Introduction}

This paper deals with wave fronts for the $1$-dimensional reaction-diffusion equation
\begin{equation}\label{eq0intro}
u_t = (\vert u_x \vert^{p-2} u_x + \vert u_x \vert^{q-2} u_x)_x + f(u), 
\end{equation}
where $2 \leq q < p$ and $f$ is a regular function such that $f(0)=0=f(1)$. The operator driving the diffusion is a $(p, q)$-Laplacian type one, and it is characterized by the presence of two (cooperative) terms, the former prevailing for large gradients and the latter prevailing for small ones. 
\smallbreak
\noindent
In these last years, there has been an increasing interest for problems driven by this operator, especially in the stationary (elliptic) case, both in bounded and unbounded domains (see, e.g., \cite{BarCanSal16, YanPer16} and the references therein). A $(p, q)$-Laplacian type operator may arise quite naturally, for instance, when taking into account more terms in the Taylor expansion of a quasi-linear operator, and finds application, e.g, in modeling chemical reactions and some physical phenomena (see for instance \cite{BenForPis98, CheIly05, HeLi08}). Of course, equations like \eqref{eq0intro} have been widely studied in the case $p=q$, i.e. for the $p$-Laplacian operator (we limit ourselves to mention some of the papers which are strictly related to the topic of the present investigation, namely \cite{AudVazPP, CoeSan14, EngGavSan13, FeiHilPetTak14, GavSan15}). The main difficulty in the case $p \neq q$ comes of course from the lack of homogeneity of the operator, which turns into preventing a straight application of almost any usual argument and requires some supplementary care.
\smallbreak
\noindent
By the expression \emph{wave front}, we mean a heteroclinic solution of the type $u(t, x)=v(x+ct)$ connecting two equilibria $u_-, u_+$ (namely, such that $v(-\infty)=u_-$, $v(+\infty)=u_+$). In view of the assumptions on $f$, for \eqref{eq0intro} such profiles will connect the two equilibria $u_-=0$ and $u_+=1$. Dealing with reaction-diffusion equations, the search for these solutions is the first natural step to get a better insight into the dynamics of the model. They play the role of phase transitions between two steady states of the system and, in the context of the original genetics models proposed by Fisher \cite{Fis37, KolPetPis37}, they actually were meant to describe the relative propagation of a favourable gene inside the considered population. 
For this reason, it is very natural to assume that $0 \leq u \leq 1$.  
\smallbreak
\noindent
Writing the equation for the traveling fronts by setting $u(t, x)=v(x+ct)$, equation \eqref{eq0intro} leads to the problem 
\begin{equation}\label{problema}
\left\{
\begin{array}{l}
(\vert v' \vert^{p-2} v' + \vert v' \vert^{q-2} v')' -cv' + f(v) = 0, \qquad (v=v(z)) \\
v(-\infty)=0, \; v(+\infty)=1.
\end{array}
\right.
\end{equation}
We will limit ourselves to the case when $f$ is strictly positive out of the equilibria (a Fisher-type case).  
To better compare with the existing literature, moreover, we will search since the very beginning for \emph{monotone} fronts, by-passing some problems which may arise from the fact that uniqueness is not ensured for the differential equation in \eqref{problema}.
\smallbreak
\noindent
The first question we are interested in regards the possible speeds of propagation of the traveling waves, namely the values $c \in \mathbb{R}$ for which \eqref{problema} has a monotone solution, which are called
\emph{admissible speeds}. In the classical case $p=q=2$, this problem has been extensively studied (see, e.g., the references in \cite{BonSan06, GarSan15}) and it has been shown that the set of the admissible speeds is an unbounded interval, whose lower endpoint is called \emph{critical speed} and is denoted by $c^*$. The value of $c^*$ depends on the behavior of the reaction term $f$: if $f$ is always below its tangent line in $0$, it is $c^*=2\sqrt{f'(0)}$. The existence of a critical speed $c^*$ has been extended to the general case $p=q$ in \cite{CoeSan14, EngGavSan13, GavSan15}, showing moreover that $c^*$ admits a variational characterization like in the linear case. 
\smallbreak
\noindent
The purpose of the present investigation is to analyze when and how the estimates for the critical speed change in presence of the two different powers $p$ and $q$ in the expression of the operator. 
Indeed, the regimes acting for $\vert z \vert$ large involve very small gradients, since for a monotone heteroclinic solution it is $v' \to 0$ at infinity. Hence, the dominant term for $z=x+ct$ large will be the one with exponent $q$. However, no information is a priori available on the derivative $v'$, so that it may be that there are regions where the term with the power $p$ is prevailing. Intuitively, such effect should be more accentuated the more there is room for the derivative to grow (and this is well seen in Figure \ref{B} in Section \ref{sez3}). Indeed, it turns out that the value of the constant providing a bound for the critical speed is actually influenced by this interaction when the nonlinearity is sufficiently large, see Theorem \ref{mainT} below. On the other hand, in the case $p=q$, we recover the results proved in \cite{CoeSan14, EngGavSan13} for the $p$-Laplacian operator. 
\smallbreak
\noindent
After our main statement, in Section \ref{osservazioni} we give some complementary remarks mentioning, in particular, the case when the two diffusive terms are in competition, namely considering the equation
$$
(\vert v' \vert^{q-2} v' - \vert v' \vert^{p-2} v')' -cv' + f(v) = 0.
$$
Avoiding a degeneration of the problem which may appear if the derivative reaches the value $1$, we also briefly discuss the picture for fronts in this situation (see Remark \ref{conilmeno}). 
\smallbreak
\noindent
The technique exploited makes use of a change of variables allowing to lead the original problem back to a two-point first order problem on $[0, 1]$, to be dealt with through a shooting technique.
\smallbreak
\noindent
The paper ends with Section \ref{sez3}, devoted to some numerical simulations illustrating our main results.

\section{Main result and discussion}\label{sez2}

In this section, we study the admissible speeds for the $1$-dimensional partial
differential equation
\begin{equation}\label{NLFB}
u_t=(\vert u_x \vert^{p-2} u_x + \vert u_x \vert^{q-2} u_x) + f(u),
\end{equation}
assuming that $2 \leq q < p$ and that the reaction term $f(u)$ satisfies the following hypothesis:
\smallbreak
\noindent
\textbf{(f)} \quad $f: [0, 1] \to \mathbb{R}$ is a $C^1$-function such that $f(0)=0=f(1)$ and $f(u) > 0$ for $u \in \,]0, 1[\,$.

\subsection{Statement and proof of the main result}

As discussed above, the search for monotone heteroclinic traveling waves $u(t, x) = v(x + ct)$ solving \eqref{NLFB} leads us to study the second order boundary value problem 
\begin{equation}\label{IIordE}
\left\{
\begin{array}{l}
(\vert v'\vert^{p-2} v' + \vert v' \vert^{q-2} v')' - c v' + f(v) = 0 \\
v(-\infty)=0, \; v(+\infty)=1, \; v' > 0.
\end{array}
\right.
\end{equation}
Since $v$ is strictly monotone, the map $z \mapsto v(z)$ is invertible, so that we can take $v$ as the new independent variable and write $z=z(v)$. Setting $\phi(v)=v'(z(v))$ and proceeding similarly, e.g., as in \cite{EngGavSan13, GarSan15}, the differential equation in \eqref{IIordE} is rewritten as  
$$
\frac{d}{dv} Q\left(\phi(v)\right)-c \phi(v)+f(v)=0,
$$
being $Q(s)$ the primitive of $(p-1)\vert s \vert^{p-2} s + (q-1) \vert s \vert^{q-2} s$ that satisfies $Q(0)=0$.  In particular, it turns out that 
$$
Q(s)= \frac{p-1}{p} \vert s \vert^p + \frac{q-1}{q} \vert s \vert^q
$$
and $Q(\cdot)$ is invertible for $s \geq 0$; it is worth noticing that we can actually invert $Q(\phi(v))$ since we search for \emph{increasing} fronts, such that $\phi(v) > 0$.
Denoting by $R(\cdot)$ the (functional) inverse of $Q(\cdot)$ and setting $y(v)=Q(\phi(v))$,  
problem \eqref{IIordE} is thus rephrased as 
\begin{equation}\label{generale}
\left\{
\begin{array}{l}
\displaystyle y'= c R(y) - f(v),\\
\\
y(0)=0=y(1), \; y(v) > 0 \textrm{ for } v \in \,]0, 1[\,.
\end{array}
\right.
\end{equation}
Moreover, since
$$
\lim_{s\to 0^+} \frac{Q(s)}{s^q}= \frac{q-1}{q} \quad {\rm and} \quad \lim_{s\to \infty} \frac{Q(s)}{s^p}= \frac{p-1}{p},
$$
it holds
\begin{equation}\label{BdR}
\lim_{s\to 0^+} \frac{R(s)}{s^{1/q}}= \left(\frac{q}{q-1}\right)^{\frac{1}{q}} \quad {\rm and} \quad \lim_{s\to \infty} \frac{R(s)}{s^{1/p}}= \left(\frac{p}{p-1}\right)^{\frac{1}{p}}.
\end{equation}
We now state our main result estimating the \emph{critical} speed for problem \eqref{IIordE}; henceforth, for a number $r > 1$, we will denote by $r'$ its conjugate exponent, namely the positive number such that $1/r + 1/r' = 1$. 
\begin{theorem}\label{mainT}
Let $f: [0, 1] \to \mathbb{R}$ satisfy assumption \textbf{(f)}. Moreover, let
\begin{equation*}
\lim_{s \to 0^+} \frac{f(s)}{s^{q'-1}} = L_0 \quad {\rm and} \quad \sup_{s \in \,]0, 1]} \frac{f(s)}{s^{q'-1}} = L_+ < +\infty
\end{equation*}
and assume that there exists $k > 0$ such that 
\begin{equation}\label{hyp1}
f(s) \leq k(1-s), \quad \textrm{ for every } s \in [0, 1].
\end{equation}
Then, there exists $c^*$ satisfying
\begin{equation}\label{stima}
L_0^{1/q'} q'^{1/q'} q^{1/q} \leq c^* \leq c_+,  
\end{equation}
with $c_+$ given by 
\begin{equation}\label{c+}
c_+= \left\{
\begin{array}{ll}
\displaystyle \frac{L_+^{\frac{1}{q'}} \, q}{q-1} \cdot (p+q-2)^{\frac{1}{q}} & \textrm{ if } \, L_+ \leq p+q-2 \vspace{0.3cm} \\
\displaystyle \frac{p (p+q-2)}{q-1} & \textrm{ if } \, \displaystyle p+q-2 < L_+ \leq \frac{p-1}{q-1} (p+q-2)
\vspace{0.3cm} \\
\displaystyle \frac{L_+^{\frac{1}{p'}} \, p}{(q-1)^{\frac{1}{p}} (p-1)^{\frac{p-1}{p}}} \; (p+q-2)^{\frac{1}{p}} & \textrm{ if } \, \displaystyle L_+ > \frac{p-1}{q-1} (p+q-2),
\end{array}
\right.
\end{equation}
such that every $c \geq c^*$ is admissible for problem \eqref{IIordE}. 
\end{theorem}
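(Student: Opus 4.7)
The plan is to reduce to the first-order formulation \eqref{generale} and implement a shooting argument in the speed $c$, organizing the work in three steps.

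For the lower bound in \eqref{stima}, I would perform a local analysis at $v=0$. For any positive solution of \eqref{generale}, the asymptotics \eqref{BdR} together with $f(s) \sim L_0 s^{q'-1}$ force the ansatz $y(v) = B v^{q'} + o(v^{q'})$ with $B > 0$; substituting and balancing the leading $v^{q'-1}$ terms yields the algebraic relation
$$q' B + L_0 = c\left(\frac{q}{q-1}\right)^{1/q} B^{1/q}.$$
Solvability in $B > 0$ is equivalent to $c\left(q/(q-1)\right)^{1/q} \geq \inf_{B > 0} (q'B + L_0)/B^{1/q}$, and a short calculation (the optimal $B$ satisfies $B^q = L_0/q$) reduces this to $c \geq L_0^{1/q'} q'^{1/q'} q^{1/q}$, which is the claimed lower bound for any admissible speed, hence for $c^*$.

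For the upper bound, I would explicitly construct, for $c = c_+$, a subsolution $\bar y$ of the Cauchy problem associated to \eqref{generale} with $\bar y(0) = 0$ and $\bar y > 0$ on a right neighborhood of $0$; since $R$ is nondecreasing, the maximal nontrivial solution $y$ of the IVP then satisfies $y \geq \bar y$ on its interval of existence, so it does not hit zero before $v = 1$. The linear-decay assumption \eqref{hyp1} near $v = 1$ is used to rule out the trajectory escaping the physical interval and to force $y(1) = 0$. The natural ansatz is a power-law barrier $\bar y(v) = A v^{\alpha}$, with $\alpha = q'$ when the $q$-part of $Q$ governs the dynamics (small $L_+$) and $\alpha = p'$ when the $p$-part takes over (large $L_+$). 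Exploiting $f(s) \leq L_+ s^{q'-1}$ together with the refined lower bounds on $R$, namely $R(y) \geq \eta^{-1/q} y^{1/q}$ for $y \leq Q(1)$ and $R(y) \geq \eta^{-1/p} y^{1/p}$ for $y \geq Q(1)$ with $\eta = (p-1)/p + (q-1)/q$, and optimizing the free constant $A$ in each regime, one reads off the first and third expressions in \eqref{c+}; the middle case corresponds to the transition where the extremal subsolution attains $\phi = R(y) = 1$ (i.e.~$y = Q(1)$), giving a bound independent of $L_+$.

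Finally, upper-closedness of the admissible set is a straightforward comparison: since the right-hand side $cR(y) - f(v)$ is monotone in $c$ for $y > 0$, if $c_0$ is admissible and $c \geq c_0$ then the corresponding IVP solution dominates $y_{c_0}$ on $(0, 1)$, and a continuous-dependence argument on $c$ produces a front for every such $c$. Setting $c^* = \inf\{c : c \text{ admissible}\}$ then gives a critical speed satisfying \eqref{stima}.

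The main obstacle will be the second step: the non-homogeneity of $Q$ prevents a single power-law ansatz from working across the full range of $L_+$, so one must reconcile the two asymptotic regimes of $R$ around the threshold $y = Q(1)$. It is precisely this matching that produces the three-piece formula for $c_+$, with the constants $(p+q-2)^{1/q}$ and $(p+q-2)^{1/p}$ emerging from the optimization in each regime.
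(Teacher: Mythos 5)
Your overall architecture (reduction to the first--order problem \eqref{generale}, a local analysis at $v=0$ for the lower bound, an explicit subsolution for the upper bound, and monotonicity in $c$ plus the infimum definition of $c^*$) matches the paper, and the first and third steps are essentially fine: your algebraic relation $q'B+L_0\le c\,q'^{1/q}B^{1/q}$ is exactly the paper's, except that you cannot simply assert that the expansion $y(v)=Bv^{q'}+o(v^{q'})$ is ``forced''; the paper works with $l=\limsup_{v\to0^+}y(v)/v^{q'}$ and De l'H\^opital, which yields the same inequality without that claim (also, the optimizer is $B=L_0/q$, not $B^q=L_0/q$). The genuine gap is in the second step, i.e.\ precisely where the three--piece formula \eqref{c+} has to be produced, and your proposed barriers do not deliver it.

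Concretely: (a) the pure power barrier $\bar y(v)=Av^{p'}$ for large $L_+$ is not a subsolution near $v=0$. There $\bar y$ is small, so by \eqref{BdR} one has $R(\bar y(v))\sim c_q\,\bar y(v)^{1/q}\sim v^{p'/q}$, while $\bar y'(v)\sim v^{p'-1}=v^{p'/p}$; since $q<p$ gives $p'/q>p'/p$, the term $cR(\bar y)$ is of strictly higher order than $\bar y'$ as $v\to0^+$, so the inequality $\bar y'+f\le cR(\bar y)$ fails on a right neighborhood of $0$ (and the bound $R(y)\ge(y/\eta)^{1/p}$, $\eta:=\tfrac{p-1}{p}+\tfrac{q-1}{q}=Q(1)$, is false for $y<Q(1)$, so it cannot be used there): whatever $L_+$ is, the barrier must behave like $v^{q'}$ at the origin. (b) Even in the regime where $\bar y=Av^{q'}$ does work, carrying out your optimization gives $q'A+L_+\le c(A/\eta)^{1/q}$ with optimal $A=L_+/q$, i.e.\ $c\ge L_+^{1/q'}q'(q\eta)^{1/q}$, valid only for $L_+\le q\eta=q\tfrac{p-1}{p}+q-1<p+q-2$; neither the constant nor the threshold is the one in \eqref{c+}$_{(i)}$, so the claim that the stated expressions ``read off'' is unsubstantiated, and the range $L_+>q\eta$ is left without any worked construction (your ``matching at $y=Q(1)$'' is exactly the missing step). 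The paper's device that resolves this is the composite ansatz $\bar y(u)=Q(\beta u^{q'-1})$: then $R(\bar y(u))=\beta u^{q'-1}$ exactly, no piecewise bound on $R$ is needed, and after dividing by $u^{q'-1}$ and using $f(u)\le L_+u^{q'-1}$ everything reduces to the scalar inequality $\tfrac{p-1}{q-1}\beta^p+\beta^q-c\beta+L_+\le0$ (inequality \eqref{3}); the three cases of \eqref{c+} then come from whether the relevant minimizer $\beta$ is $\le1$, $=1$ or $\ge1$, so that one of $\beta^p,\beta^q$ can be absorbed into the other. Finally, you misattribute the role of \eqref{hyp1}: it is not what forces $y(1)=0$ (that comes for free by shooting backward from $y(1)=0$ as in \eqref{cback} and comparing with the forward subsolution of \eqref{cfw}); \eqref{hyp1} is needed only in the reconstruction step you omit, namely to guarantee $y(u)\le C(1-u)^2$ near $u=1$ so that the profile obtained from \eqref{reconstr} takes infinite time to reach the equilibria and is a genuine monotone heteroclinic of \eqref{IIordE} on all of $\mathbb{R}$.
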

We first observe that assumption \eqref{hyp1} is not used to solve problem \eqref{generale} but ensures that the solutions of \eqref{generale} are in a one-to-one correspondence with global strictly monotone profiles, defined on the whole $\mathbb{R}$ and solving \eqref{IIordE}. Such profiles are reconstructed, up to translations, by solving the problem 
\begin{equation}\label{reconstr}
\frac{du}{dt}= R(y(u(t))), \quad u(0)=1/2; 
\end{equation}
similarly as in \cite[Proposition 2.3]{CoeSan14}, in view of \eqref{BdR} we have that the solution of $y'=cR(y) - f(u)$ has a growth controlled by $u^{q'}$ near $0$, while \eqref{hyp1} ensures that, for the same solution, it holds $y(u) \leq C(1-u)^2$. It follows that the solution of \eqref{reconstr} is defined (and strictly monotone) on the whole $\mathbb{R}$ (see \cite[Proposition 2.3]{CoeSan14} for further details).  
\smallbreak
\noindent 
In order to prove Theorem \ref{mainT}, we first give a lower bound on the critical speed $c^*$. This will only come from the behavior of $R(y)$ near $0$, which, in view of \eqref{BdR}, will be ruled completely by the \emph{smallest} power inside the operator. It is indeed sufficient to assume that  
\begin{equation}\label{stimaf0}
\lim_{s \to 0^+} \frac{f(s)}{s^{q'-1}} = L_0,
\end{equation}
for a positive constant $L_0$, 
in order to state the following result, which follows similarly as in \cite{CoeSan14}. 
\begin{lemma}\label{upperbd}
Assume \eqref{stimaf0} and let $y(v)$ be a solution of the differential equation in \eqref{generale}, satisfying $y(0)=0$ and $y(v) > 0$ in a right neighborhood of $0$.
Then,
\begin{equation}\label{bound1}
c \geq L_0^{1/q'} q'^{1/q'} q^{1/q}.
\end{equation}
\end{lemma}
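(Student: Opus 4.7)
The heuristic behind the bound is that any $y$ with $y(0)=0$ and $y>0$ to the right of $0$ must leave the origin with the power-law rate $y(v) \sim A v^{q'}$ for some $A > 0$. Indeed, $q'$ is the unique exponent balancing the three terms $y'$, $cR(y)$ and $f(v)$ in the equation, since $q'/q = q'-1$ while, by \eqref{BdR} and \eqref{stimaf0}, $R(s) \sim (q')^{1/q}\,s^{1/q}$ and $f(s) \sim L_0\,s^{q'-1}$ as $s\to 0^+$. Matching the coefficients of $v^{q'-1}$ in $y' = cR(y) - f(v)$ produces the algebraic condition
\begin{equation*}
q'\,A + L_0 \;=\; c\,(q')^{1/q}\,A^{1/q},
\end{equation*}
which must admit a positive root $A$.

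To make this rigorous I would set $h(v) := y(v)/v^{q'}$, obtaining the transformed equation
\begin{equation*}
v\,h'(v) + q'\,h(v) \;=\; c\,\frac{R(v^{q'} h(v))}{v^{q'-1}} \;-\; \frac{f(v)}{v^{q'-1}}.
\end{equation*}
Since $y(v) \to 0^+$ as $v \to 0^+$, relations \eqref{BdR} and \eqref{stimaf0} imply that, for bounded $h$, the right-hand side tends to $c(q')^{1/q} h^{1/q} - L_0$. Passing to the limit along a subsequence $v_n \to 0^+$ on which $h(v_n) \to \ell \in (0,+\infty)$ then yields $q'\ell + L_0 = c(q')^{1/q} \ell^{1/q}$. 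A direct minimization of $A \mapsto (q'A + L_0)/((q')^{1/q} A^{1/q})$ on $(0,+\infty)$ gives the minimizer $A_\star = L_0/q$ and, using the identity $q'(q-1) = q$, the minimum value $L_0^{1/q'} (q')^{1/q'} q^{1/q}$. This forces $c \geq L_0^{1/q'} (q')^{1/q'} q^{1/q}$, which is exactly \eqref{bound1}.

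The main obstacle is providing the a priori control on $h$ that legitimizes the limiting procedure. Boundedness follows from the differential inequality $y' \leq cR(y)$ together with $R(s) \sim (q')^{1/q} s^{1/q}$: integrating $y'/R(y) \leq c$ yields $y(v) \leq C v^{q'}$ near $0^+$, whence $h$ is bounded. The existence of a strictly positive accumulation value is obtained by contradiction, in the spirit of \cite[Prop.~2.3]{CoeSan14}: if $c$ were strictly less than the claimed bound, the function $A \mapsto q'A + L_0 - c(q')^{1/q} A^{1/q}$ would stay bounded below by some $\eta > 0$ on $[0,+\infty)$; the transformed ODE would then force $v\,h'(v) \leq -\eta/2$ for $v$ small, and integration would produce $h(v) \to +\infty$ as $v\to 0^+$, contradicting the boundedness of $h$ just established.
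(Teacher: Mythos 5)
Your argument is correct, and its final form is a genuinely different rigorization of the same core idea as the paper. Both proofs pivot on the ratio $y(v)/v^{q'}$ and on the same one-variable optimization: your minimization of $A \mapsto (q'A+L_0)/\bigl((q')^{1/q}A^{1/q}\bigr)$, with minimizer $A_\star=L_0/q$ and minimum $L_0^{1/q'}q'^{1/q'}q^{1/q}$, is exactly the dual of the paper's requirement that $L_0/q' \leq \max_{x}\bigl(c\,q'^{-1/q'}x^{1/q}-x\bigr)$. The difference is how the algebraic constraint on the ratio is extracted: the paper sets $l=\limsup_{v\to 0^+}y(v)/v^{q'}$ and applies a De l'H\^opital-type inequality together with \eqref{BdR} to get $l - c\,q'^{-1/q'}l^{1/q}+L_0/q'\leq 0$, whereas you first prove the a priori bound $y(v)\leq Cv^{q'}$ by integrating $y'/R(y)\leq c$ (which, incidentally, makes explicit the finiteness of the ratio that the paper's limsup argument leaves implicit), and then argue by contradiction: if $c$ were below the bound, the function $\Phi(A)=q'A+L_0-c(q')^{1/q}A^{1/q}$ would be uniformly positive on $[0,\infty)$, giving $vh'(v)\leq -\eta/2$ for $h=y/v^{q'}$ near $0$ and hence logarithmic blow-up of $h$, contradicting boundedness. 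One caveat: the intermediate step in your plan where you pass to the limit along a subsequence $h(v_n)\to\ell$ to obtain the \emph{equality} $q'\ell+L_0=c(q')^{1/q}\ell^{1/q}$ is not justified as stated, since $v_nh'(v_n)$ need not tend to zero along that subsequence (and in general one only gets an inequality, as in the paper); however, this step is superfluous, because your final blow-up contradiction is self-contained and yields \eqref{bound1} directly.
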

\begin{proof}
As usual, we want to find an equation for $l=\limsup_{v \to 0^+ } \frac{y(v)}{v^{q'}} \geq 0$.
From the differential equation in \eqref{generale}, dividing by $q' v^{q'-1}$ and using De l'Hopital rule, we obtain 
$$
l = \limsup_{v \to 0^+}\frac{y(v)}{v^{q'}} \leq %\limsup_{v \to 0^+}\frac{y'(v)}{q' v^{q'-1}} 
 c\limsup_{v \to 0^+} \frac{R(y(v))}{q' v^{q'-1}} - \frac{L_0}{q'},
$$
from which, recalling \eqref{BdR}, it holds
$$
l - \frac{c}{q'^{1/q'}} l^{1/q} + \frac{L_0}{q'} \leq 0.
$$
For this inequality to be true, at least it has to be 
\begin{equation}\label{intermedia}
\frac{L_0}{q'} \leq \max_{x \in \mathbb{R}} \frac{c}{q'^{1/q'}} x^{1/q} - x;
\end{equation}
the maximum at the right-hand side is attained at $x=c^{q'}/(q' q^{q'})$ and a direct computation shows that \eqref{intermedia} can be true only if $c$ satisfies \eqref{bound1}. 
\end{proof}
Our second result provides the upper bound for the admissible speeds. As usual in performing this estimate, a global bound on $f$ is needed. We thus assume that 
\begin{equation}\label{stimaf}
\sup_{s \in \,]0, 1]} \frac{f(s)}{s^{q'-1}} = L_+ < +\infty.
\end{equation}
The strategy is the following shooting-type one. First, we observe that, for every $c$, $y \equiv 0$ is a subsolution to the backward Cauchy problem 
\begin{equation}\label{cback}
\left\{
\begin{array}{l}
\displaystyle y'= c R(y) - f(v),\\
\\
y(1)=0, \;
\end{array}
\right.
\end{equation}
since $R(0)=0$ and $f$ has positive sign. Hence, the solution $y(v)$ of \eqref{cback} (uniqueness holds for \eqref{cback} since $R$ is increasing) is nonnegative. Even more, $y(v)$ never vanishes in $\,]0, 1[\,$, due to the sign of $f$, so it is well defined up to $v=0$. If $y(v)=0$, then we are done; otherwise, the idea is to search for suitable values of $c$ for which there exists a positive subsolution for the forward Cauchy problem  
\begin{equation}\label{cfw}
\left\{
\begin{array}{l}
\displaystyle y'= c R(y) - f(v),\\
\\
y(0)=0, \;
\end{array}
\right.
\end{equation} 
defined on the whole $[0, 1]$. In this way, a standard uniqueness argument (out of $y=0$) will ensure that, for these values of $c$, we can find the desired connection between $0$ and $1$. 
\smallbreak
\noindent
We have the following result.
\begin{proposition}\label{teo1}
Let $f$ satisfy assumption \eqref{stimaf}. Then, for every $c \geq c_+$, where $c_+$ is given by \eqref{c+}, there exists a positive subsolution of \eqref{cfw}. 
\end{proposition}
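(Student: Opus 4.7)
The plan is to exhibit an explicit one-parameter family of subsolutions $\underline y_\alpha$, parametrised by $\alpha > 0$, and to show that for every regime of $L_+$ a suitable choice of $\alpha$ makes $\underline y_\alpha$ work provided $c \geq c_+$. Precisely, I would set $\underline\phi(v) = \alpha v^{q'-1}$ and correspondingly $\underline y(v) = Q(\underline\phi(v))$; automatically $\underline y(0) = 0$ and $\underline y > 0$ on $(0,1]$, as required. Since $R = Q^{-1}$, the subsolution inequality $\underline y' \leq c R(\underline y) - f(v)$ is equivalent to $Q'(\underline\phi)\,\underline\phi' \leq c\underline\phi - f(v)$.

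A direct computation, using $q'-1 = 1/(q-1)$, yields
\[
Q'(\underline\phi)\,\underline\phi' \;=\; \frac{(p-1)\alpha^p}{q-1}\,v^{(p+1-q)/(q-1)} + \alpha^q\, v^{1/(q-1)}.
\]
Bounding $f(v) \leq L_+\, v^{1/(q-1)}$ via \eqref{stimaf} and dividing through by $v^{1/(q-1)}$, the subsolution inequality reduces to
\[
\frac{(p-1)\alpha^p}{q-1}\,v^{(p-q)/(q-1)} + \alpha^q + L_+ \;\leq\; c\alpha \qquad \text{for all } v \in (0,1].
\]
Since $(p-q)/(q-1) > 0$, the left-hand side is maximised at $v = 1$, and a sufficient condition becomes $c \geq g(\alpha)$, with
\[
g(\alpha) \;:=\; \frac{(p-1)\alpha^{p-1}}{q-1} + \alpha^{q-1} + \frac{L_+}{\alpha}.
\]

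It remains to show that, in each of the three regimes of \eqref{c+}, a suitable choice of $\alpha$ makes $g(\alpha) \leq c_+$. For $L_+ \leq p+q-2$ I would take $\alpha = (L_+/(p+q-2))^{1/q} \leq 1$; since $\alpha^{p-1} \leq \alpha^{q-1}$, I can majorise $g(\alpha) \leq \tfrac{p+q-2}{q-1}\alpha^{q-1} + L_+/\alpha$, and the substitution produces exactly the first expression of $c_+$. Symmetrically, for $L_+ > \tfrac{p-1}{q-1}(p+q-2)$ I would take the mirrored choice $\alpha = (L_+(q-1)/((p-1)(p+q-2)))^{1/p} \geq 1$; now $\alpha^{q-1} \leq \alpha^{p-1}$ yields $g(\alpha) \leq \tfrac{p+q-2}{q-1}\alpha^{p-1} + L_+/\alpha$, and the substitution recovers the third expression of $c_+$. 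Finally, in the intermediate regime the trivial choice $\alpha = 1$ already works: $g(1) = \tfrac{p+q-2}{q-1} + L_+$, and the hypothesis $L_+ \leq \tfrac{p-1}{q-1}(p+q-2)$ is exactly what is needed to conclude $g(1) \leq \tfrac{p(p+q-2)}{q-1} = c_+$.

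The main obstacle is guessing the correct scaling $\underline\phi(v) = \alpha v^{q'-1}$; once this is in place, everything reduces to a one-variable optimisation in which the three cases of \eqref{c+} arise from the change of dominance between the $\alpha^{p-1}$ and $\alpha^{q-1}$ terms at $\alpha = 1$, and the thresholds $p+q-2$ and $\tfrac{p-1}{q-1}(p+q-2)$ on $L_+$ in the theorem are precisely those at which the minimiser of the two-term majorant crosses $\alpha = 1$.
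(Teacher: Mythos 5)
Your proposal is correct and follows essentially the same route as the paper: the ansatz $\underline y(v)=Q(\beta v^{q'-1})$, the reduction (after bounding $f(v)\le L_+v^{q'-1}$ and evaluating at $v=1$) to the one-variable inequality $\tfrac{p-1}{q-1}\beta^{p}+\beta^{q}-c\beta+L_+\le 0$, and the three-case analysis obtained by majorising $\beta^{p}$ by $\beta^{q}$ when $\beta\le1$ (and vice versa when $\beta\ge1$), with the same choices of $\beta$ and the same thresholds for $L_+$. The only difference is cosmetic: you fix the optimal $\beta$ and verify $g(\beta)\le c_+$, whereas the paper fixes $\bar c=c_+$ and checks that the minimum of the majorant is nonpositive.
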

\begin{proof}
We will search the subsolution of \eqref{cfw} in the form
\begin{equation}\label{sottosol}
 y(u)= Q(\beta u^{\alpha}), \quad u \in [0, 1],
\end{equation}
with $\alpha>0$ and $\beta>0$ to be determined. By substituting into the differential equation, noticing that $y(u) > 0$ for every $u \in \,]0, 1]$ so that $Q(y(u))$ is always invertible in $[0, 1]$, in order for $y(u)$ to be a subsolution it has to be
\begin{equation}\label{1}
\alpha\left[ (p-1)\beta^p u^{\alpha p-1}+ (q-1)\beta^q u^{\alpha q-1}\right]-c \beta u^\alpha + f(u) \leq 0.
\end{equation}
If we now divide by $u^{\alpha}$, it is easy to check that the smallest positive $\alpha$ such that both $\alpha p-1-\alpha$ and $\alpha q-1-\alpha$ are positive is $\alpha=q'-1$. Hence we set $\alpha=q'-1$ and \eqref{1} reads as
$$
(q'-1)\left[ (p-1)\beta^p u^{p(q'-1)-1}+ (q-1)\beta^q u^{q(q'-1)-1}\right]-c \beta u^{q'-1} + f(u) \leq 0,
$$
which, dividing by $u^{q'-1}$ and using \eqref{stimaf}, becomes
\begin{equation}\label{2}
(q'-1)\left[ (p-1)\beta^p u^{p(q'-1)-q'}+ (q-1)\beta^q \right]-c \beta  + L_+ \leq 0.
\end{equation}
Since \eqref{2} has to be satisfied for every $u \in [0, 1]$, we are thus led to investigate for which values of $c$ there exists $\beta > 0$ such that
\begin{equation}\label{3}
\mathcal{G}_c(\beta):=(q'-1)\left[ (p-1)\beta^p + (q-1)\beta^q \right]-c \beta  + L_+ = \frac{p-1}{q-1}\beta^p+ \beta^q-c\beta+L_+ \leq 0.
\end{equation}
Of course, if we are able to find $\bar{c}$ such that $\mathcal{G}_{\bar{c}}(\bar{\beta}) \leq 0$ for a suitable $\bar{\beta} > 0$, then the fact that $\mathcal{G}_c(\beta)$ is decreasing in the variable $c$ ensures that for every $c > \bar{c}$ it will be $\mathcal{G}_c(\bar{\beta}) \leq 0$, yielding the statement. 
We now briefly sketch the proof in the three cases highlighted in \eqref{c+}, which we refer to as \eqref{c+}$_{(i)}$, \eqref{c+}$_{(ii)}$ and \eqref{c+}$_{(iii)}$. 
\smallbreak
\noindent
\begin{itemize}
\item[1)]\underline{Case \eqref{c+}$_{(i)}$: $L_+ \leq p+q-2$.} In this case, we fix 
$$
\bar{c}=\frac{L_+^{\frac{q-1}{q}}\, q}{q-1} (p+q-2)^{\frac{1}{q}} 
$$ 
and show that $\min \mathcal{G}_{\bar{c}} \leq 0$. To this end, we consider the function 
$$
g_{\bar{c}}(\beta):=\left(\frac{p-1}{q-1}+1\right)\beta^q-\bar{c}\beta+L_+\leq 0, 
$$ 
which attains its minimum at the point 
$$
\bar \beta =\left(\frac{\bar{c}\,(q-1)}{q \, (p+q-2)}\right)^{\frac{1}{q-1}}.
$$
In view of the assumptions on $L_+$ and $\bar{c}$, we have that 
$$
\bar{\beta}= \left(\frac{L_+}{p+q-2}\right)^{\frac{1}{q}} \leq 1, \quad g_{\bar{c}}(\bar{\beta})=0. 
$$
On the other hand, since $\bar{\beta} \leq 1$, it is also 
$
\mathcal{G}_{\bar{c}}(\bar{\beta}) \leq g_{\bar{c}}(\bar{\beta}) = 0,
$
showing that the choice $c=\bar{c}$ allows to construct a positive lower solution of the form \eqref{sottosol} for $\beta=\bar{\beta}$. 
\item[2)]
\underline{Case \eqref{c+}$_{(ii)}$: $p+q-2 < L_+ \leq \frac{p-1}{q-1} (p+q-2)$.} 
Here we set $$\bar{c}= p(p+q-2)/(q-1)$$ and we compute the minimum of  
$$
h_{\bar{c}}(\beta)=\left(\frac{p-1}{q-1}+1\right)\beta^p-\bar{c}\beta+L_+,  
$$
which is attained at
\begin{equation}\label{barbeta}
\bar{\beta} = \left(\frac{\bar{c} (q-1)}{p(p+q-2)}\right)^{\frac{1}{p-1}} = 1.
\end{equation}
In view of the position done, we have $h_{\bar{c}}(\bar{\beta}) \leq 0$, where the equality holds if and only if $L_+=(p-1)(p+q-2)/(q-1)$. Since $\bar{\beta} = 1$, we have that 
$$
\mathcal{G}_{\bar{c}}(\bar{\beta}) = h_{\bar{c}}(\bar{\beta}) \leq 0, 
$$ 
yielding the conclusion. 
\item[3)]\underline{Case \eqref{c+}$_{(iii)}$: $L_+ \geq \frac{p-1}{q-1}(p+q-2)$.} 
The picture is here similar to the previous item, apart from the fact that, fixing  
$$
\bar{c}=\frac{L_+^{\frac{p-1}{p}} \, p}{(q-1)^{\frac{1}{p}} (p-1)^{\frac{p-1}{p}}} \; (p+q-2)^{\frac{1}{p}},
$$
it turns out that $\bar{\beta}$ defined in \eqref{barbeta} is greater than one for $L_+ > \frac{p-1}{q-1} (p+q-2)$, and $\mathcal{G}_{\bar{c}}(\bar{\beta}) \leq h_{\bar{c}}(\bar{\beta})=0$. 
\end{itemize}
\end{proof}
Theorem \ref{mainT} now follows combining the two estimates given in Lemma \ref{upperbd} and Proposition \ref{teo1}. 
\smallbreak
\noindent
Let us observe that the estimate provided in Proposition \ref{teo1} may be not optimal: a gap in the optimality is for sure created whenever the sign of $\mathcal{G}_c$ is different from the one of $g_c$ (or $h_c$) and the minimum of $\mathcal{G}_c$ is near $1$. A numerical search for the exact value of $\beta$ would provide better bounds, but since we are not able to solve inequality \eqref{3} without passing through a stronger control involving only one power of $\beta$, we have preferred to state the result with an analytical proof. We postpone to the next section some remarks and corollaries of Theorem \ref{mainT}.

\subsection{Some remarks and corollaries} \label{osservazioni}

We begin the section by observing that Proposition \ref{teo1} is constructed through a \emph{sufficient} condition for $c$ to be admissible (implying the existence of a positive lower solution), so that it may be that the provided bounds can be further improved with another strategy of proof. However, in the expression of $c_+$ given in Theorem \ref{mainT}, it appears a (small) range of values for $L_+$, given in \eqref{c+}$_{(ii)}$, where the process is really undergoing the interaction of the two powers $p$ and $q$ at the same level (while we see that, in cases \eqref{c+}$_{(i)}$ and \eqref{c+}$_{(iii)}$, the bounds for $c^*$ involve in a very marginal way $p$ and $q$, respectively). 
This range actually appears to exist and is not a simple drawback of our estimates: in Figure \ref{figbound} we plot the graph of the function $\mathcal{G}_c(\beta)$ for $p=4$, $q=3$, $L_+=6$ (such that $p+q-2 < L_+ \leq \frac{p-1}{q-1} (p+q-2)$) and $c$ given by \eqref{c+}$_{(i)}$, and we see that in this case $\mathcal{G}_c$ is always positive, meaning that the bound for $c_+$ given by \eqref{c+}$_{(i)}$ has to be corrected.
\begin{figure}[!h]
\centering
\includegraphics[scale=0.8]{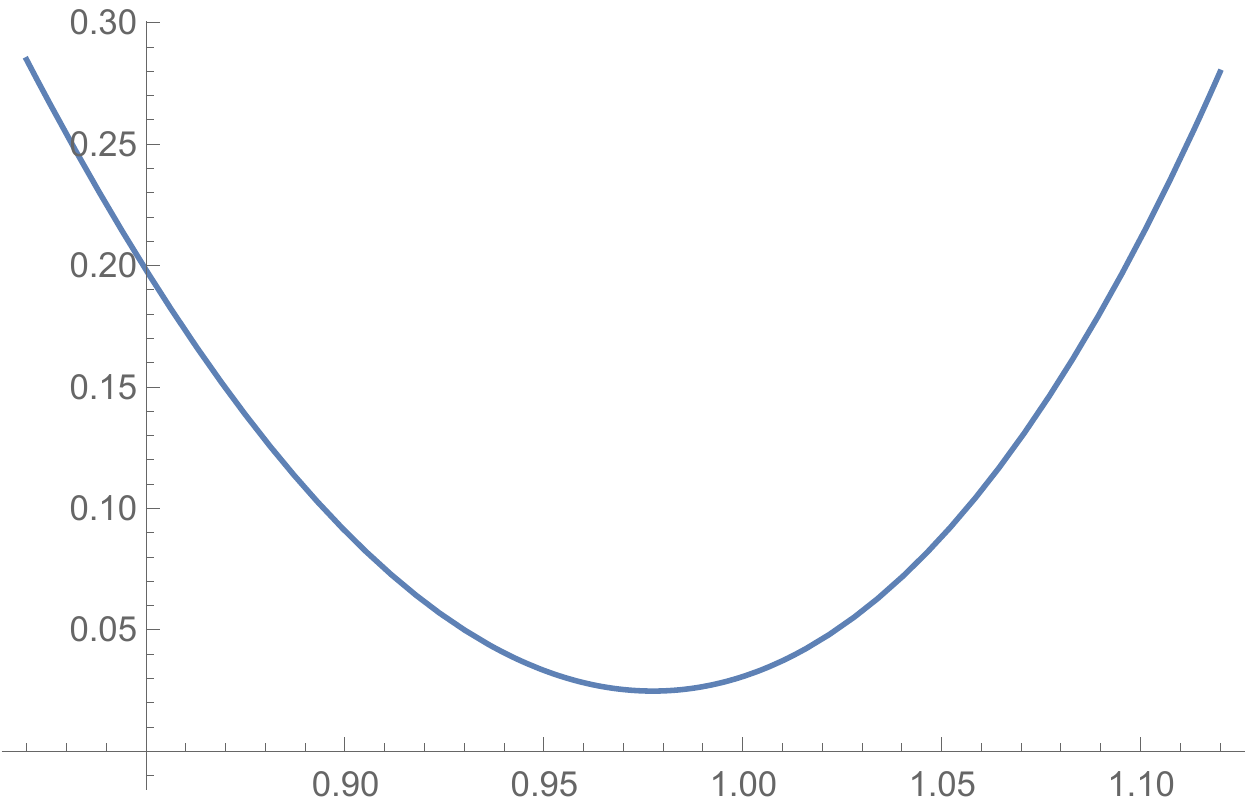}
\caption{We plot the graph of $\mathcal{G}_c(\beta)$ for the choices $p=4$, $q=3$, $L_+=6$ and $c$ given by \eqref{c+}$_{(i)}$.}
\label{figbound}
\end{figure}
\newline
In fact, when $p=q$, the range in \eqref{c+}$_{(ii)}$ disappears, while the expressions of $c_+$ in \eqref{c+}$_{(i)}$ and \eqref{c+}$_{(iii)}$ coincide. Observing that, in this case, $Q(s)=2\tfrac{q-1}{q} \vert s \vert^q$, and thus \eqref{BdR} becomes  
$$
\lim_{s\to 0} \frac{R(s)}{s^{1/q}}= \left(\frac{q}{2(q-1)}\right)^{\frac{1}{q}} \quad {\rm and} \quad \lim_{s\to \infty} \frac{R(s)}{s^{1/q}}= \left(\frac{q}{2(q-1)}\right)^{\frac{1}{q}},
$$
with the same proofs as in the previous section we are able to recover the results proved for the $p$-Laplacian, up to a constant which comes from the fact that we are considering the operator given by $(2 \vert u' \vert^{p-2} u' )'$. The following result can be compared with \cite[Proposition 5.5]{CoeSan14}.
\begin{corollary}\label{coroll}
Let $f$ be as in Theorem \ref{mainT} and $p=q$. Then, there exists $c^*$ with 
\begin{equation*}
2^{\frac{1}{q}} L_0^{\frac{1}{q'}} q'^{\frac{1}{q'}} q^{\frac{1}{q}} \leq c^* \leq %2^{\frac{1}{q}} L_+^{\frac{q-1}{q}} \frac{q}{(q-1)^{\frac{q-1}{q}}}=
2^{\frac{1}{q}}L_+^{\frac{1}{q'}}q^{\frac{1}{q}} {q'}^{\frac{1}{q'}}
\end{equation*}  
such that every $c \geq c^*$ is admissible for problem \eqref{IIordE}. 
\end{corollary}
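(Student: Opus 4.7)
My plan is to run through the arguments of Lemma \ref{upperbd} and Proposition \ref{teo1} verbatim in the degenerate case $p=q$, keeping track of the extra factor of $2$ that now multiplies the $q$-Laplacian. When $p=q$ one has $Q(s)=2\frac{q-1}{q}|s|^q$ and, for $s\geq 0$, $R(s)=\left(\frac{qs}{2(q-1)}\right)^{1/q}$; accordingly the two limits in \eqref{BdR} collapse into the single constant $(q/(2(q-1)))^{1/q}$. The reduction of \eqref{IIordE} to the first order two-point problem \eqref{generale}, as well as the reconstruction of the profile via \eqref{reconstr}, carries over word for word, so it suffices to furnish a lower and an upper bound on the admissible speeds.

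For the lower bound I would repeat the de l'H\^opital calculation in the proof of Lemma \ref{upperbd} with $l:=\limsup_{v\to 0^+} y(v)/v^{q'}$. The only change is that the asymptotic constant of $R$ now carries an extra factor $2^{-1/q}$, so the inequality that $l$ must satisfy reads
\[
l-\frac{c}{2^{1/q}\,q'^{1/q'}}\,l^{1/q}+\frac{L_0}{q'}\leq 0.
\]
Maximizing the right hand side in $l\geq 0$ exactly as in the original argument, one sees that this can hold only if $c \geq 2^{1/q}L_0^{1/q'}q'^{1/q'}q^{1/q}$, which is the claimed lower bound.

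For the upper bound I would insert the ansatz \eqref{sottosol}, $y(u)=Q(\beta u^{\alpha})$, with the new $Q$, into the forward Cauchy problem \eqref{cfw}. A direct computation yields $y'(u)=2\alpha(q-1)\beta^{q}u^{\alpha q-1}$, while $cR(y(u))=c\beta u^{\alpha}$ as before; choosing once more $\alpha=q'-1$, dividing by $u^{q'-1}$ and using \eqref{stimaf}, the subsolution inequality collapses into the \emph{single}-power condition $2\beta^q-c\beta+L_+\leq 0$ (to be realized for some $\beta>0$). Now the function $\beta\mapsto 2\beta^q-c\beta+L_+$ attains its minimum at $\bar\beta=(c/(2q))^{1/(q-1)}$, and imposing this minimum to be non-positive rearranges, via the identities $q'-1=1/(q-1)$ and $q-1=q/q'$, exactly into $c\geq 2^{1/q}L_+^{1/q'}q^{1/q}q'^{1/q'}$. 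Combining the two bounds yields the statement.

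The calculation presents no real obstacle: the essential work is the bookkeeping of the factor $2$ in $Q$ (which propagates into a factor $2^{1/q}$ in both the lower and the upper bound), together with the observation that the trichotomy in \eqref{c+} trivializes here, because the inequality $\mathcal G_c(\beta)\leq 0$ of Proposition \ref{teo1} reduces to a single-power polynomial in $\beta$ that can be optimized directly, with no need to restrict to $\bar\beta\leq 1$ or $\bar\beta\geq 1$.
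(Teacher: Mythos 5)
Your proposal is correct and follows essentially the same route as the paper, which proves the corollary by observing that for $p=q$ one has $Q(s)=2\tfrac{q-1}{q}|s|^q$, so that \eqref{BdR} holds with the constant $\bigl(\tfrac{q}{2(q-1)}\bigr)^{1/q}$, and then rerunning the proofs of Lemma \ref{upperbd} and Proposition \ref{teo1} verbatim, with the factor $2$ propagating into $2^{1/q}$ in both bounds and the trichotomy in \eqref{c+} collapsing. Your explicit bookkeeping (the single-power inequality $2\beta^q-c\beta+L_+\leq 0$, minimized at $\bar\beta=(c/(2q))^{1/(q-1)}$ without any constraint $\bar\beta\lessgtr 1$) is exactly the computation the paper leaves implicit.
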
 
Incidentally, notice that in the particular case when $L_0=L_+$ this statement actually gives the exact value of the critical speed. 
\smallbreak
\noindent
\smallbreak
\noindent
We continue with a series of remarks. 
\begin{remark}
In the special case $p=2q$, it is possible to perform some more explicit computations. Indeed, 
since the expression for $Q(y)$ reads as
$$
Q(y)= \frac{2q-1}{2q}  y ^{2q} + \frac{q-1}{q} y^q,
$$
a straightforward computation leads to 
\begin{equation*}
R(y):=\sqrt[q]{\frac{q}{2q-1}} \sqrt[q]{-\frac{(q-1)}{q}+\sqrt{\frac{(q-1)^2}{q^2} +\frac{2(2q-1)}{q}y}}.
\end{equation*}
We notice that, on one hand, $R(y)$ can be rewritten as
\begin{equation*}
R(y)=\sqrt[q]{\frac{q}{2q-1}} \sqrt[q]{\frac{\frac{2(2q-1)}{q}y}{\frac{(q-1)}{q}+\sqrt{\frac{(q-1)^2}{q^2} +\frac{2(2q-1)}{q}y}}},
\end{equation*}
showing that, for $y \to 0$, the behavior of $R(y)$ is dictated by $y^{1/q}$; on the other hand,
\begin{equation*}
R(y)=\sqrt[q]{\frac{q}{2q-1}} y^{\frac{1}{2q}}\sqrt[q]{-\frac{(q-1)}{q \sqrt{y}}+\sqrt{\frac{(q-1)^2}{q^2 \, y} +\frac{2(2q-1)}{q}}},
\end{equation*}
implying that $R(y)$ goes to infinity like $y^{1/2q}$ for $y \to \infty$. Hence, \eqref{BdR} is satisfied, as expected. 
\end{remark} 
\begin{remark}
Theorem \ref{mainT} is extendable to the case of density-dependent diffusions, namely for an equation like
$$ 
u_t = \partial_x (\vert \partial_x D(u) \vert^{p-2} \partial_x D(u) + \vert \partial_x D(u) \vert^{q-2} \partial_x D(u)) + f(u),
$$
where $D(s)$ is a function with strictly positive derivative. The details of the change of variables in this case can be found, for instance, in
\cite[Section 2]{GarStrPP}. For the sake of briefness, we avoid giving the explicit statement, which is actually a natural adaptation of the one above. Furthermore, also the case with convection is likely to be dealt with using a similar strategy as in \cite{GarStrPP}. 
\end{remark}
\begin{remark}\label{conilmeno}
We quote another case which we find worth briefly showing, that is, the situation when the considered equation reads as 
$$
u_t=(|u_x|^{q-2} u_x- |u_x|^{p-2} u_x)_x + f(u), 
$$
namely the two terms appearing in the expression of the operator driving the diffusion are in competition. Clearly, here $p$ has to be different from $q$ and, as before, we assume $p > q$ in order for the dynamics to be comparable with the one already examined at least at infinity: namely, we want $Q(\phi(v))$ to be positive at least in the region where $\phi(v)$ is small, that is, $v \approx 0$ and $v \approx 1$. Here the operator degenerates for $\vert u_x \vert = 1$ and a problem of invertibility arises for the function $Q(s)=
\frac{q-1}{q} \vert s \vert^q - \frac{p-1}{p} \vert s \vert^p$; nevertheless, if we are able to restrict the dynamics in the neighborhood $\mathcal{I}$ of $0$ where $Q$ is invertible, then we can still recover the existence of fronts. As it is expected because of the previous discussion, in this case the critical speed $c^*$ will depend only on the power $q$; indeed, we can prove the following result.
\begin{proposition}
Let $f$ be as in Theorem \ref{mainT}; then there exists $c^*$, satisfying
\begin{equation*}
L_0^{1/q'} q'^{1/q'} q^{1/q} \leq c^* \leq L_+^{1/q'} q'^{1/q'} q^{1/q},
\end{equation*}
such that every 
\begin{equation}\label{boundc-}
c \in \left[c^*, q\left(\frac{q-1}{p-1}\right)^{\frac{q-1}{p-q}}\right]
\end{equation}
is admissible (provided that such an interval is non-empty). 
\end{proposition}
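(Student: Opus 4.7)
The plan is to follow the same change of variables and shooting strategy used for Theorem \ref{mainT}, once we carefully restrict to the invertibility region of the primitive $Q(s)=\tfrac{q-1}{q}s^{q}-\tfrac{p-1}{p}s^{p}$. A direct inspection of $Q'$ shows that $Q$ is strictly increasing on $[0,s^{*}]$, where $s^{*}=\bigl(\tfrac{q-1}{p-1}\bigr)^{1/(p-q)}$ is the unique positive zero of $(q-1)s^{q-1}-(p-1)s^{p-1}$. Restricting $\phi(v)=v'(z(v))$ to take values in $[0,s^{*}]$ and setting $y(v)=Q(\phi(v))$, the problem is again rephrased as $y'=cR(y)-f(v)$ on $[0,1]$ with $y(0)=0=y(1)$, $y>0$ in $(0,1)$, plus the extra constraint $y(v)\leq Q(s^{*})$ everywhere.

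The lower bound $c\geq L_{0}^{1/q'}q'^{1/q'}q^{1/q}$ follows exactly as in Lemma \ref{upperbd}: the only ingredient used there is the asymptotic $R(y)/y^{1/q}\to (q/(q-1))^{1/q}$ as $y\to 0^{+}$, which holds verbatim in the present setting since the $q$-term in $Q(s)$ is unchanged in sign and coefficient close to zero.

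For the upper bound I would mimic Proposition \ref{teo1}, looking for a positive subsolution of the forward Cauchy problem of the form $\eta(u)=Q(\beta u^{q'-1})$; the novelty is the requirement $\beta\leq s^{*}$, which keeps $\beta u^{q'-1}$ in $[0,s^{*}]$ for every $u\in[0,1]$. Substituting into $\eta'(u)\leq cR(\eta(u))-f(u)$, using $R(\eta(u))=\beta u^{q'-1}$ and the identity $(q'-1)(q-1)=1$, and dividing by $u^{q'-1}$, one is led to
\[
\beta^{q}-(q'-1)(p-1)\beta^{p}\,u^{q'(p-1)-p}-c\beta+L_{+}\leq 0.
\]
Since $p>q$ makes the exponent $q'(p-1)-p$ strictly positive, the middle term is non-positive on $[0,1]$ and vanishes at $u=0$; hence the worst case reduces to $\beta^{q}-c\beta+L_{+}\leq 0$. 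A standard one-variable minimisation produces the optimal choice $\bar\beta=(c/q)^{1/(q-1)}$, for which the inequality holds precisely when $c\geq L_{+}^{1/q'}q'^{1/q'}q^{1/q}$.

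The main obstacle, and exactly the reason why the upper endpoint in \eqref{boundc-} appears, is that the optimiser $\bar\beta$ must also be admissible inside the invertibility region: the constraint $\bar\beta\leq s^{*}$ translates directly into $c\leq q\bigl(\tfrac{q-1}{p-1}\bigr)^{(q-1)/(p-q)}$. Whenever the resulting interval for $c$ is non-empty, the subsolution $\eta$ exists with values in $[0,Q(s^{*})]$; combining it with the observation that $y\equiv 0$ is a subsolution of the backward Cauchy problem (since $R(0)=0$ and $f\geq 0$), the same uniqueness-off-the-trivial-solution argument used in Proposition \ref{teo1} yields the desired solution on $[0,1]$, which can then be reconstructed into a global monotone front via \eqref{reconstr}.
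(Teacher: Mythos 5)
Your proposal is correct and follows essentially the same route as the paper: the same invertibility threshold $s_0=\bigl(\tfrac{q-1}{p-1}\bigr)^{1/(p-q)}$, the same lower bound via Lemma \ref{upperbd} (only the behavior of $R$ near $0$ matters), and the same subsolution $Q(\beta u^{q'-1})$ with the negative $p$-term dropped, leading to $\beta^q-c\beta+L_+\leq 0$ with optimizer $\bar\beta=(c/q)^{1/(q-1)}$, whose admissibility $\bar\beta\leq s_0$ produces exactly the upper endpoint in \eqref{boundc-}.
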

Of course, the non-emptiness of the interval appearing in \eqref{boundc-} turns out to impose a condition on $L_+$, which however appears cumbersome and not particularly significant (and for this reason we omit it). Observe also that the statement yields again the exact value of the ``critical speed'' whenever $L_0=L_+$.
\begin{proof}
Let us begin the proof by observing that the neighborhood $\mathcal{I}$ of $0$ where $Q$ is invertible is given by 
$$
\mathcal{I}= \left[0, \left(\frac{q-1}{p-1}\right)^{\frac{1}{p-q}}=:s_0\right].
$$
Hence, in searching a subsolution of the form $Q(\beta u^{\alpha})$, we have to ensure that $\beta u^\alpha \leq s_0$ for every $u \in [0, 1]$, namely $\beta \leq s_0$.  
\\
On one hand, the lower bound \eqref{bound1} for the critical speed still holds without any change. On the other hand, when trying to prove an upper bound analogous to the one given in \eqref{stima}-\eqref{c+}, by using the same strategy and assuming that $Q$ is always invertible along $\beta u^\alpha$ ($u \in [0, 1]$), we end up with
\begin{equation}\label{1bb}
(q'-1)\left[(q-1)\beta^q -(p-1)\beta^p u^{p(q'-1)-q'} \right]-c \beta  + L_+ \leq 0.
\end{equation}
Since in this case we have a negative sign in front of the term coming from the $p$-Laplacian operator and since
$
\inf_{u \in [0,1]}u^{p(q'-1)-q'} =0,
$
it turns out that it is sufficient to find a value $\bar{\beta}$ such that
$
\bar{\beta}^q -c \bar{\beta}  + L_+ \leq 0.
$
Exploiting the same computations as in the proof of Proposition \ref{teo1} leads to
\begin{equation*}
c\geq L_+^{1/q'} q'^{1/q'} q^{1/q}, 
\end{equation*}
while the upper bound in \eqref{boundc-} is needed in order for $\bar{\beta}$ to be less than $s_0$ (and thus for the expression \eqref{1bb} to be allowed). 
\end{proof}
\end{remark}

\section{Some numerical simulations}\label{sez3}

In this section, we collect a series of numerical simulations in order to better understand the role of the $p$-Laplacian in the dynamics of the traveling waves. 
\smallbreak
\noindent
We have seen in Proposition \ref{teo1} that, in spite of the fact that the operator ruling the dynamics at infinity is the $q$-Laplacian one (since the monotone wave profiles at infinity possess very small derivative), the $p$-Laplacian seems to have some influence on the value of the critical speed, especially if the reaction term is sufficiently large. 
Actually, this seems confirmed also by the numerical simulations we are going to present and is better understood by recalling a simple invariance-upon-rescaling property of the arguments presented in Section \ref{sez2}. Namely, if we assume that $f: [0, H] \to \mathbb{R}$ is such that $f(0)=0=f(H)$ and $f > 0$ on $\,]0, H[\,$, for $H > 0$ fixed, and we search for monotone connections between $0$ and $H$, we are led to solve
\begin{equation}\label{probH}
\left\{
\begin{array}{l}
\displaystyle y'(u)= c R(y(u)) - f(u), \\
\\
y(0)=0=y(H). \;
\end{array}
\right.
\end{equation}
Rescaling the independent variable by setting $v=u/H \in [0, 1]$,  
we can bring back \eqref{probH} to a problem on $[0, 1]$:
indeed, $y_c(u)$ is a solution of \eqref{probH} if and only if $w(v)=y_c(Hv)$ solves 
\begin{equation}\label{prob1}
\left\{
\begin{array}{l}
\displaystyle w'(v) = H c R(w(v)) - H f(Hv), \\
\\
w(0)=0=w(1). \;
\end{array}
\right.
\end{equation}
In the particular case of a $q$-Laplacian equation, leading to $Q(s)= \tfrac{q-1}{q} \vert s \vert^q$, so that $R(s)= (\tfrac{q}{q-1} \vert z \vert)^{1/q}$, the assumption on the reaction term guaranteeing the exact computation of the critical speed reads as  
$$
\sup_{u \in \,]0, H]} \frac{f(u)}{u^{q'-1}}=\lim_{u \to 0} \frac{f(u)}{u^{q'-1}} = L,
$$
which turns out into asking that, setting $g(v)= Hf(Hv)$, it holds
$$
M:=\sup_{v \in \,]0, 1]} \frac{g(v)}{v^{q'-1}}=\lim_{u \to 0} \frac{g(v)}{v^{q'-1}} = H^{q'} L. 
$$
However, from the computation of the critical speed done in \cite{CoeSan14} (see also Corollary \ref{coroll}) in this particular case, we have that \eqref{prob1} has a solution if and only if 
$$
Hc \geq M^{\frac{1}{q'}} q'^{\frac{1}{q'}} q^{\frac{1}{q}},
$$
but this is equivalent to  
\begin{equation}\label{bound3}
c \geq L^{\frac{1}{q'}}q'^{\frac{1}{q'}} q^{\frac{1}{q}}, 
\end{equation}
namely it is possible to compute the critical speed for \eqref{probH} with the same formula as for \eqref{prob1}. 
\smallbreak
\noindent
Incidentally, we notice that assuming instead $f(u) \leq f'(0) u$ for every $u \in [0, H]$, the same argument allows to infer that the critical speed for problem \eqref{probH} with $R(s)=\sqrt{2s}$ (namely, for the classical Fisher equation) is always given by $c^*=2\sqrt{f'(0)}$.  
\smallbreak
\noindent
This argument fails for our equation, as we expect also in view of the discussion at the beginning of Section \ref{osservazioni}. We show indeed two numerical simulations (Figures \ref{A} and \ref{B}) with a completely different behavior: in the first one, for a small $H$, we show that the critical speed appears to be the one associated with the $q$-Laplacian, given by the bound \eqref{bound3}; the second one, for a larger $H$, shows that the critical speed is strictly larger than the one provided by such bound.  
Again, we remark that this seems in line with the intuitive observation that, the more there is room for $v'$ to possibly become large (i.e., the larger is $H$), the more the $p$-Laplacian passes to govern the diffusion process. Indeed, in Figure \ref{A} we have that $y(u)$ is always very small, while this is not the case for Figure \ref{B}. We refer to the captions for further comments. 
\bigbreak
\bigbreak
\begin{figure}[!h]
\centering
\includegraphics[scale=0.8]{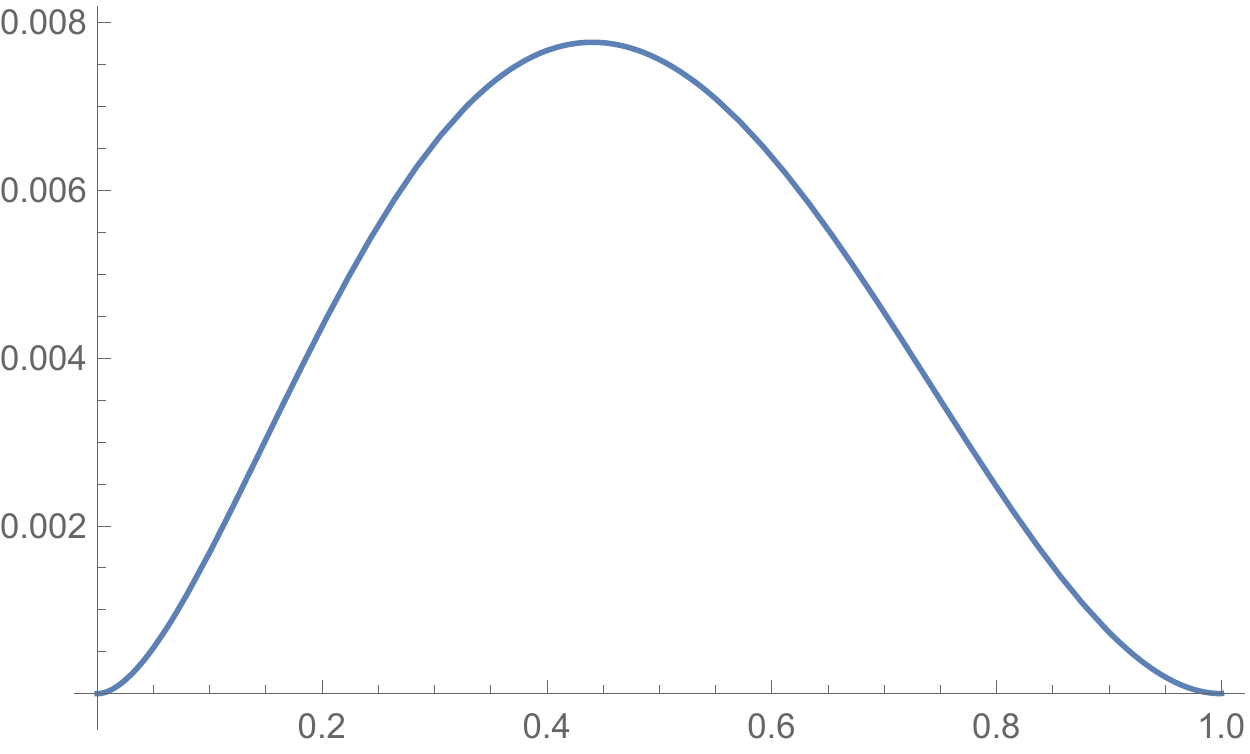}
\caption{We plot the solution of \eqref{probH} with $p=4$, $q=2$, $H=1$, and $f(u)= u^{q'-1} (1-u)$, for $c=2$. Actually, with the notations used throughout the section, here $L=1$, $q=q'=2$, so the lower bound for $c^*$ is equal to $2$ and it actually seems that $c=2$ is already an admissible speed. Notice that indeed $y'(u)$ appears very small, so the process appears ruled by the $q$-power appearing in the expression of the operator. }
\label{A}
\end{figure}

\begin{figure}[!h]
\centering
\includegraphics[scale=0.8]{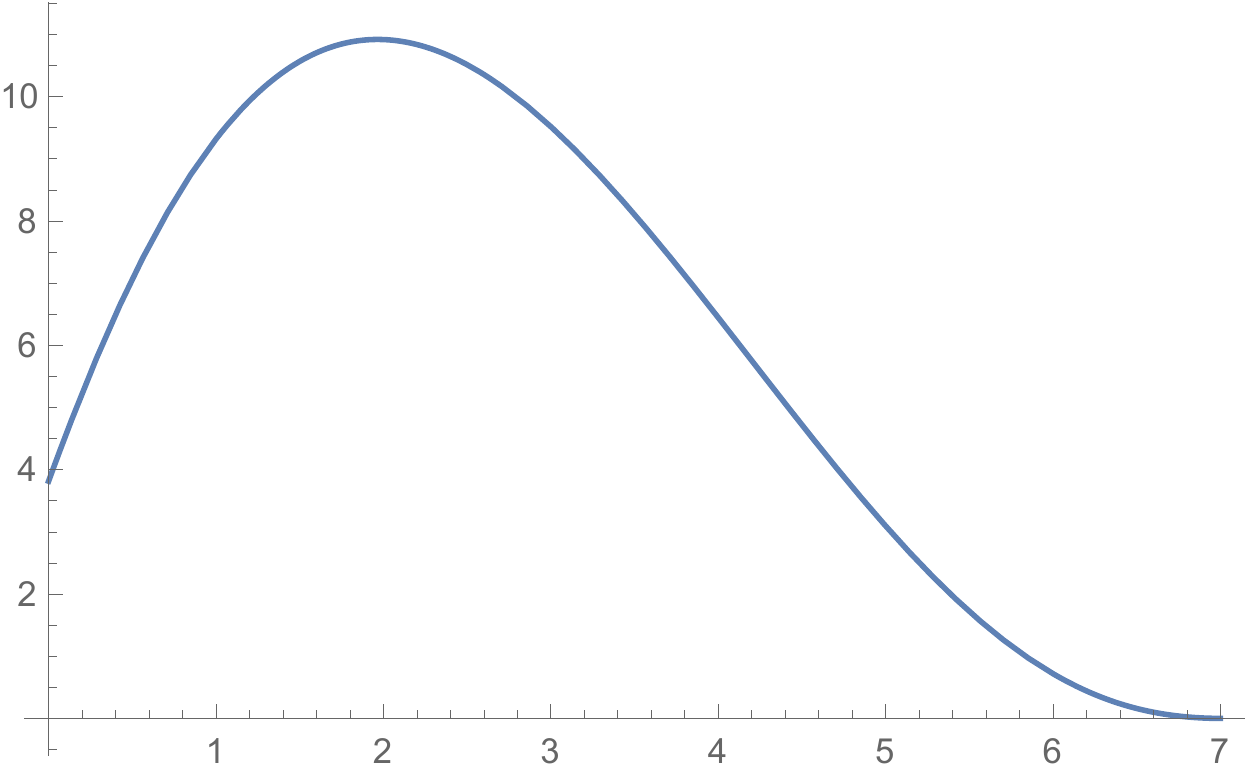}
\caption{We plot the solution of \eqref{cback} with $p=4$, $q=2$, $H=7$, $f(u)= u^{q'-1} (H-u)$ and $c=2\sqrt{7}$; here $L=7$ and the bound \eqref{bound3} would give $c^* \geq 2\sqrt{7}$. However, this speed appears far from being admissible and, indeed, $y$ reaches values much greater than $1$, corresponding to regions where the $p$-Laplacian rules the diffusive process.}
\label{B}
\end{figure}

The third simulation we show is instead devoted to a competitive case, namely to the situation described in Remark \ref{conilmeno}, where the operator is given by the difference between a $q$-Laplacian and a $p$-Laplacian. As already remarked, in this case a problem of invertibility of the function $Q(s)$ arises, and we can search for monotone connections only as long as we remain in the region where its inverse $R(\cdot)$ is well-defined. In Figure \ref{C}, we show a case where the interval appearing in \eqref{boundc-} is not empty and the critical speed is computed exactly; in the situation depicted in Figure \ref{D}, instead, there are no admissible speeds (again we refer the reader to the captions for further comments).

\begin{figure}[!h]
\centering
\includegraphics[scale=0.8]{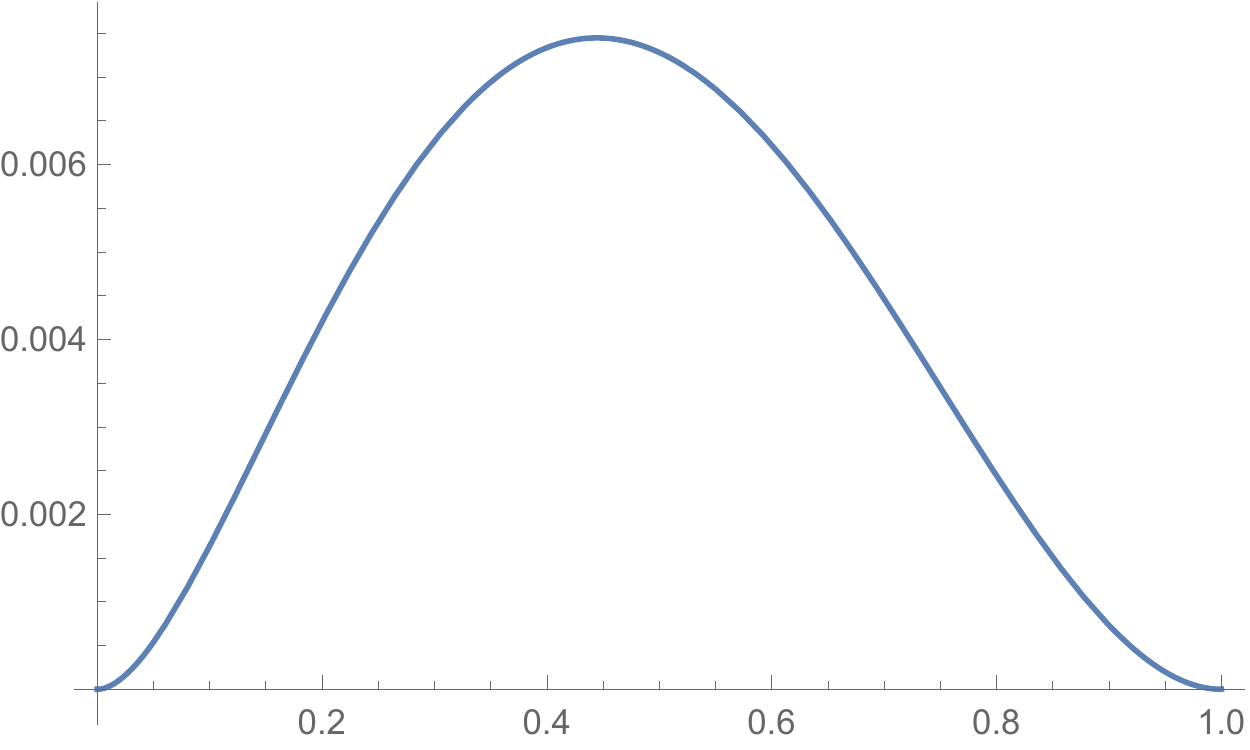}
\caption{We plot the solution of \eqref{probH} with $p=4$, $q=2$, $Q(s)=\tfrac{q-1}{q} \vert s \vert^q - \tfrac{p-1}{p} \vert s \vert^p$, $f(u)= u^{q'-1} (1-u)$ and $c=2$; here $y(u)$ stays well below the threshold of invertibility of $Q$, so that the situation roughly appears the same as in Figure \ref{A} and the bound given by \eqref{bound1} already gives an admissible speed.}
\label{C}
\end{figure}

\clearpage

\begin{figure}[!h]
\centering
\includegraphics[scale=0.8]{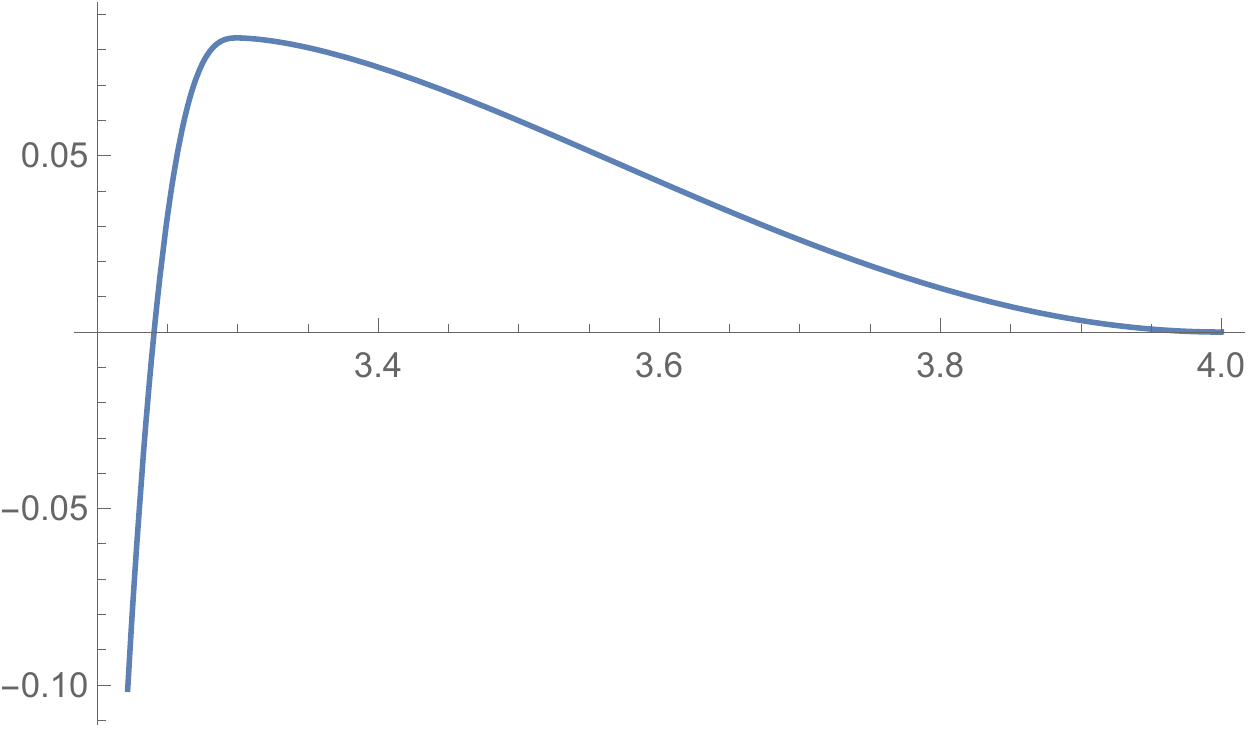}
\caption{On the other hand, for $f(u)= u^{q'-1} (4-u)$ (and the other positions as in Figure \ref{C}) the solution of \eqref{cback} has to increase too much its derivative in order to connect the two equilibria $0$ and $4$, and this is reflected in the fact that we end up in the non-invertibility region for $Q$, where the problem loses its well-posedness. Thus, we are not able to find any solution of \eqref{generale}. Again, we stress that the inverse $R(\cdot)$ has to be defined at least near $0$, since at infinity the slope of the wave profile is almost horizontal. }
\label{D}
\end{figure}

\end{document}